\renewcommand{\hom}{\operatorname{Hom}}
\newcommand{\CC}{\mathbb{C}}
\newcommand{\NN}{\mathbb{N}}
\newcommand{\ZZ}{\mathbb{Z}}
\newcommand{\card}{\operatorname{card}}
\newcommand{\vect}{\operatorname{Vect}}
\def\co{\colon\thinspace}
\newenvironment{proof}[1][Preuve]%
{
\begin{trivlist} \item[]  {\em #1.} }%
{\hspace*{\fill} $\Box$
\end{trivlist}}
\newtheorem{thm}{Th\'eor\`eme}
\newtheorem{prop}[thm]{Proposition}
\newtheorem{lemma}[thm]{Lemme}
\newtheorem{cor}[thm]{Corollaire}
\newtheorem{rk}[thm]{Remarque}
\title{Module d'Alexander et repr\'esentations m\'etab\'eliennes}
\author{Hajer Jebali\\
Faculté des Sciences de Monastir\\
Boulevard de l'environnement\\
5019 Monastir\\
Tunisie\\
courriel : hajer.jebali@fsm.rnu.tn}
\date{}
\begin{document}

\selectlanguage{francais}

\maketitle

\begin{abstract}

On sait, depuis des travaux de Burde et de Rham, que l'\'etude
des repr\'esentations du groupe d'un n\oe ud dans le groupe des
matrices triangulaires sup\'erieures inversibles d'ordre $2$
permet de d\'etecter les racines du polyn\^ome d'Alexander du n\oe
ud. Dans ce travail, nous nous proposons de g\'en\'eraliser ce
r\'esultat et ce en consid\'erant les repr\'esentations
 du groupe du n\oe ud dans le groupe des matrices triangulaires sup\'erieures inversibles d'ordre $n,\ n\geq 2$.
  Cette approche nous permettra de retrouver la d\'ecomposition du module d'Alexander \`a c\oe fficients complexes du n\oe ud.
\medskip
\noindent 

\end{abstract}

\selectlanguage{english}
\begin{abstract}

It is known, since works of Burde and de Rham, that one can detect the roots of the Alexander polynomial of a knot by  studying the representations of the knot group into the group of the invertible upper triangular $2\times 2$ matrices. In this work, we propose to generalize this result by considering the representations of the knot group into the group of the invertible upper triangular $n\times n$ matrices, $n\geq 2$. This approach will enable us to find the decomposition of the Alexander module with complex c\oe fficients of the knot.
\end{abstract}

\selectlanguage{francais}

\section{Introduction}
\label{sec:intro}

Soient $K$ un n\oe ud de $S^3$, $V(K)$ un voisinage tubulaire de $K$, $X=\overline{S^3\setminus V(K)}$ son compl\'ementaire et $\pi=\pi_1(\overline{S^3\setminus V(K)})$ le groupe fondamental de $X$. Soit $\mu$ un m\'eridien du n\oe ud. Notons $X^\infty$ le rev\^etement cyclique infini de $X$ correspondant au groupe des commutateurs $\pi'=[\pi,\pi]$ et $\pi/\pi'\simeq T=\left\langle t|-\right\rangle$ le groupe quotient monog\`ene infini engendr\'e par l'image $t$ du m\'eridien $\mu$. Les groupes d'homologie $H_*(X^\infty,\CC)$ sont munis d'une structure de $\Lambda=\CC[t,t^{-1}]$-modules. Ces modules sont des modules de torsion et de type fini. Le module $H_1(X^\infty,\CC)$ est appel\'e module d'Alexander \`a c\oe fficients complexes du n\oe ud. Son id\'eal d'ordre est principal. Tout g\'en\'erateur de cet id\'eal est appel\'e polyn\^ome d'Alexander de $K$ et est not\'e $\Delta_K(t)$ (voir \cite{Gor78}).

Le module d'Alexander \`a c\oe fficients complexes se d\'ecompose sous la forme

\[
H_{1}(X^{\infty},\CC)=\displaystyle\bigoplus_{\Delta_{K}(\alpha)=0}\tau_{\alpha},
\text{ avec }
\tau_{\alpha}=\displaystyle\bigoplus_{i=1}^{k_{\alpha}}\tau_{\alpha}^{i},
\text{ o\`u }
\tau_{\alpha}^{i}=\displaystyle\frac{\Lambda}{(t-\alpha)^{q_{i}}},
\]
$q_{i}\in\NN^{*}$ et 
$k_{\alpha}=\dim_\CC 
(H^{1}(\pi,\CC_{\alpha})\otimes_\Lambda \CC_\alpha)$.
Ici, pour $\alpha\in\CC^{*}$, on notera $\CC_{\alpha}$ le
$\Lambda$-module $\CC$ dont l'action est donn\'ee par
$$q(t)\cdot z=q(\alpha)z,\quad q(t)\in\Lambda.$$
Le $\Lambda$-module $\CC_{1}$ est simplement not\'e $\CC$.

Soit $G_n$ le groupe des matrices triangulaires sup\'erieures inversibles d'ordre $n$, $n\geq 2$. On regarde le produit semi-direct $H_1(X^\infty,\CC)\rtimes T$, qui s'injecte dans $\displaystyle\prod_{\Delta_K(\alpha)=0}\prod_{i=1}^{k_\alpha}\frac{\Lambda}{(t-\alpha)^{q_i}}\rtimes T$. Or, un facteur de la forme $\displaystyle\frac{\Lambda}{(t-\alpha)^q}\rtimes T$ s'injecte dans $G_{q+1}$ (voir~\cite{BF08}), d'o\`u l'int\'er\^et de consid\'erer les repr\'esentations, c'est-\`a-dire homomorphismes de groupes, du groupe du n\oe ud \`a valeurs dans $G_n$, $n\geq 2$. Plus pr\'ecis\'ement, nous nous int\'eressons \`a l'\'etude des repr\'esentations qui sont de la forme
\[\rho_n(\gamma)=\begin{pmatrix}
\alpha^{|\gamma|}&x_{12}(\gamma)&x_{13}(\gamma)&\ldots&x_{1n}(\gamma)\\
0&1&x_{23}(\gamma)&\ldots&x_{2n}(\gamma)\\
\vdots &\ddots&\ddots&&\vdots\\
\vdots&&\ddots&1&x_{n-1,n}(\gamma)\\
0&\ldots&\ldots&0&1
\end{pmatrix}\,,\]
avec $\mid\gamma\mid=p(\gamma)$, o\`u $p\co\pi\to\pi/\pi'\simeq\ZZ$ d\'esigne la projection canonique.

Ce choix a \'et\'e motiv\'e, d'une part, par les travaux de~\cite{Dwy75},~\cite{FS87} et~\cite{Mor04} qui ont \'etabli un lien entre le produit de Massey et l'\'etude des homomorphismes de groupes, des suites centrales descendantes des groupes libres, du calcul diff\'erentiel libre ou encore des invariants de Milnor. Le produit de Massey est d\'efini \`a l'aide des matrices triangulaires sup\'erieures avec $1$ sur la diagonale. D'autre part, ce choix a \'et\'e motiv\'e par les travaux de Burde et de Rham qui se sont int\'eress\'es au cas $n=2$ \cite{Bur67} et \cite{dR67}. Plus pr\'ecis\'ement, ils ont s\'epar\'ement montr\'e qu'il existe des repr\'esentations  non ab\'eliennes du groupe du n\oe ud dans $G_2$ si et seulement si $\alpha$ est racine du polyn\^ome d'Alexander. Ils ont \'egalement montr\'e que $k_\alpha=\dim H^1(\pi,\CC_\alpha)$.

Soit, pour $n\geq 2$, l'application $\rho_n^V$ qui \`a $\gamma\in\pi$ associe la matrice
\[ \rho_{n}^{V} (\gamma) =\left(\begin{array}{c|cc}
\alpha^{|\gamma|}&V(\gamma)\\
\hline 0& \phi_{n-1}(\gamma)&
\end{array}\right)\]
o\`u $V = (v_{1},\ldots,v_{n-1})$ est un $(n-1)$-uplet de $1$-cocha\^ines de $\pi$ dans le $\pi$-module $\CC_\alpha$ (voir paragraphe~\ref{sec:eqobs}) et $\phi_m\co\pi\to GL(m,\CC)$ est l'homomorphisme ab\'elien d\'efini par 
\[\phi_m(\mu)=\begin{pmatrix}
1&1&0&\cdots&0\\
0&1&1&\ddots&\vdots\\
\vdots& \ddots&\ddots&\ddots &0\\
\vdots& &\ddots &1&1\\
 0&\cdots&\cdots
 &0&1 \end{pmatrix}\,.\]
 Posons
 
 \[C_n=\{ U\in Z^{1}(\pi,\CC_{\alpha})\ |\ \exists\ (v_{2},\ldots,v_{n-1})\in(C^{1}(\pi,\CC_{\alpha}))^{n-2},\]
 
 \[\ \rho_{n}^{(U,v_{2},\ldots,v_{n-1})}\in\hom(\pi,G_n)\}\,.\]

Nous montrerons que $C_n$ est un sous-espace vectoriel de l'ensemble des $1$-cocycles $Z^1(\pi,\CC_\alpha)$ qui contient l'ensemble des $1$-cobords $B^1(\pi,\CC_\alpha)$. Plus pr\'ecis\'ement, nous montrerons le r\'esultat suivant:
\begin{thm}\label{thm:fond}
Soit $n\geq 2$, alors l'ensemble des $1$-cobords $B^1(\pi,\CC_\alpha)$ est strictement contenu dans $C_n$ si et seulement s'il existe $1\leq i\leq k_\alpha$ tel que $q_i\geq n-1$.

De plus, $\dim C_n=1+\card\{1\leq i\leq k_\alpha\ |\ q_i\geq n-1\}\,.$
\end{thm}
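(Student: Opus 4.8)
The plan is to translate the defining property of $C_n$ into a statement about $1$-cocycles with values in the single $\Lambda$-module $\Lambda/(t-\alpha)^{n-1}$, and then read everything off from the universal coefficient theorem over the principal ideal domain $\Lambda=\CC[t,t^{-1}]$. The decisive step is the following reformulation. The identity $\rho_n^V(\gamma\delta)=\rho_n^V(\gamma)\rho_n^V(\delta)$ holds for all $\gamma,\delta$ if and only if $V=(v_1,\dots,v_{n-1})$ satisfies $V(\gamma\delta)=\alpha^{|\gamma|}V(\delta)+V(\gamma)\phi_{n-1}(\delta)$ (the diagonal blocks being automatic). Putting $W(\gamma):=V(\gamma)\phi_{n-1}(\gamma)^{-1}$ and using the multiplicativity of $\phi_{n-1}$, this turns into the ordinary $1$-cocycle equation $W(\gamma\delta)=W(\gamma)+\gamma\cdot W(\delta)$ for the $\pi$-module $M_{n-1}:=\hom_\CC(\CC^{n-1}_{\phi_{n-1}},\CC_\alpha)$ of row vectors, on which $\gamma$ acts by $W\mapsto\alpha^{|\gamma|}W\phi_{n-1}(\gamma)^{-1}$; and since $\phi_{n-1}(\gamma)^{-1}$ is upper unitriangular, the first coordinate of $W$ is precisely $v_1=U$. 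As $\phi_{n-1}$ is abelian, $M_{n-1}$ is a $\Lambda$-module; the action of $t$ (that is, of $\mu$) has the single eigenvalue $\alpha$ and a single Jordan block, so $M_{n-1}\cong\Lambda/(t-\alpha)^{n-1}$, and under this isomorphism the $\pi$-equivariant surjection $p\co M_{n-1}\to\CC_\alpha$, $W\mapsto W_1$, becomes, up to a harmless unit, the canonical projection $\Lambda/(t-\alpha)^{n-1}\twoheadrightarrow\Lambda/(t-\alpha)$. Therefore $C_n=p_*\bigl(Z^1(\pi,\Lambda/(t-\alpha)^{n-1})\bigr)$; in particular $C_n$ is a linear subspace of $Z^1(\pi,\CC_\alpha)$, and since $p$ is onto it carries $B^1(\pi,\Lambda/(t-\alpha)^{n-1})$ onto $B^1(\pi,\CC_\alpha)$, so that $B^1(\pi,\CC_\alpha)\subseteq C_n$ --- this is already the assertion made just before the theorem.

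For the numerics I would pass to cohomology. Since $p_*$ takes cocycles to cocycles and coboundaries onto coboundaries, $C_n/B^1(\pi,\CC_\alpha)$ is the image of the induced map $\bar p_*\co H^1(\pi,\Lambda/(t-\alpha)^{n-1})\to H^1(\pi,\CC_\alpha)$, so $\dim C_n=\dim B^1(\pi,\CC_\alpha)+\dim\operatorname{Im}\bar p_*$. One may assume $\alpha\neq1$ (the only case that matters, as $\Delta_K(1)=\pm1$); then $t-1$ is a unit modulo $(t-\alpha)^m$, so $H^0(\pi,\Lambda/(t-\alpha)^m)=\Ker(t-1)=0$ and consequently $\dim B^1(\pi,\CC_\alpha)=\dim\CC_\alpha-\dim H^0(\pi,\CC_\alpha)=1$. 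To compute $\operatorname{Im}\bar p_*$ I would use that $X$ is aspherical, that $\Lambda$ is a principal ideal domain, and that $H_0(X^\infty,\CC)=\CC$, $H_1(X^\infty,\CC)=\mathcal M$ (the Alexander module) and $H_i(X^\infty,\CC)=0$ for $i\geq2$: the universal coefficient theorem then gives, for every finitely generated $\Lambda$-module $M$ with $M/(t-1)M=0$, a natural isomorphism $H^1(\pi,M)\cong\hom_\Lambda(\mathcal M,M)$ (the $\operatorname{Ext}^1_\Lambda(\CC,M)$ contribution vanishes). By naturality in $M$, the map $\bar p_*$ is identified with post-composition by $p$, that is, with $\hom_\Lambda(\mathcal M,\Lambda/(t-\alpha)^{n-1})\to\hom_\Lambda(\mathcal M,\Lambda/(t-\alpha))$.

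It then remains to carry out a purely algebraic computation. Only the $\alpha$-primary summand $\bigoplus_{i=1}^{k_\alpha}\Lambda/(t-\alpha)^{q_i}$ of $\mathcal M$ contributes to these $\hom$-groups, and post-composition by $p$ respects the direct sum, so one only has to understand $\hom_\Lambda\bigl(\Lambda/(t-\alpha)^{q},\Lambda/(t-\alpha)^{n-1}\bigr)\to\hom_\Lambda\bigl(\Lambda/(t-\alpha)^{q},\Lambda/(t-\alpha)\bigr)$ for a single $q$. A homomorphism out of $\Lambda/(t-\alpha)^{q}$ is determined by the image $z$ of $1$, which is constrained to lie in $(t-\alpha)^{\max(n-1-q,\,0)}\Lambda/(t-\alpha)^{n-1}$; composing with reduction modulo $(t-\alpha)$ sends $z$ to $0$ when $q\leq n-2$ (for then $z\in(t-\alpha)\Lambda/(t-\alpha)^{n-1}$) and is onto $\CC$ when $q\geq n-1$. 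Hence $\dim\operatorname{Im}\bar p_*=\card\{1\leq i\leq k_\alpha\mid q_i\geq n-1\}$, which yields $\dim C_n=1+\card\{1\leq i\leq k_\alpha\mid q_i\geq n-1\}$, and $B^1(\pi,\CC_\alpha)\subsetneq C_n$ exactly when this dimension is at least $2$, i.e.\ when some $q_i\geq n-1$.

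The whole difficulty is concentrated in the first step: finding the substitution $W=V\phi_{n-1}(\cdot)^{-1}$, and then checking carefully that $M_{n-1}$ is genuinely $\Lambda/(t-\alpha)^{n-1}$ and that the map ``extract the $1$-cocycle $U$ realised by $W$'' is the canonical projection onto $\Lambda/(t-\alpha)$. Everything downstream is formal: the universal coefficient theorem, its naturality in the coefficient module (routine, but it is exactly what lets one transport $\bar p_*$ into $\hom_\Lambda(\mathcal M,-)$), and a length count in the discrete valuation ring $\Lambda_{(t-\alpha)}$. From the obstruction-theoretic viewpoint the tower of conditions ``$v_j$ can be chosen once $v_1,\dots,v_{j-1}$ have been fixed'' is the vanishing of the iterated Massey products $\langle U,e,\dots,e\rangle$ with $n-2$ copies of the abelianisation class $e\in H^1(\pi,\CC)$; the cohomological computation above is simply the most economical way to evaluate them.
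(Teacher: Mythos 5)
Your argument is correct, and at bottom it runs on the same engine as the paper's proof --- reduce the obstruction system to the existence of a $\Lambda$-homomorphism from the Alexander module into a single size-$(n-1)$ Jordan block with eigenvalue $\alpha$, then analyse one cyclic summand $\Lambda/(t-\alpha)^{q}$ at a time --- but you package the two key steps differently. The paper proves Proposition~\ref{propphi} (existence of $(v_2,\dots,v_{n-1})$ is equivalent to a row vector $\boldsymbol{\Phi}\co\pi'/\pi''\to\CC^{n-1}$ satisfying $\boldsymbol{\Phi}(\mu^ky\mu^{-k})=\alpha^k\boldsymbol{\Phi}(y)J_{n-1}^{-k}$) and then settles the two implications by the explicit matrix computations of Lemmes~\ref{Lemmecondnec} and~\ref{Lemmecondsuf}: the $(q+1)$-st component of $\boldsymbol{\Phi}(y)(J_{n-1}^{-1}-I_{n-1})^q$ is $(-\alpha)^q\varphi_1(y)$, and an explicit $\boldsymbol{\Phi}$ is built on the basis $e_j=[(t-\alpha)^j]$. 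You instead name the coefficient module at the outset: the substitution $W=V\phi_{n-1}(\cdot)^{-1}$ turns the homomorphism condition~(\ref{equ:hom}) into the honest cocycle condition for $M_{n-1}\cong\Lambda/(t-\alpha)^{n-1}$, with $U$ recovered by the canonical reduction $p$ to $\Lambda/(t-\alpha)$; the two lemmas then collapse into the standard description of $\hom_\Lambda$ between cyclic modules over the PID $\Lambda$, transported to $C_n$ via the natural isomorphism $H^1(\pi,M)\simeq\hom_\Lambda(H_1(X^\infty,\CC),M)$ and the identity $\dim C_n=\dim B^1(\pi,\CC_\alpha)+\dim\operatorname{Im}\bar p_*$. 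What your route buys is economy and transparency: the filtration by the $q_i$ is visibly the image filtration of $\hom_\Lambda(\Lambda/(t-\alpha)^{q_i},\Lambda/(t-\alpha)^{n-1})\to\CC$, and no coordinate computation with $(J_{n-1}^{-1}-I_{n-1})^q$ is needed. The price is that you must justify the universal coefficient identification and its naturality for the non-semisimple coefficients $\Lambda/(t-\alpha)^{n-1}$ (the paper invokes it only for $\CC_\alpha$ and treats the larger module by hand through Proposition~\ref{propphi}); that step is standard via the Wang sequence or the UCT over $\Lambda$ once one notes $(t-1)$ acts invertibly, and you correctly supply the remaining normalization $\dim B^1(\pi,\CC_\alpha)=1$ from $\alpha\neq1$.
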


Si nous supposons que les puissances $q_i$ sont ordonn\'ees de sorte que
\[1\leq q_1\leq\ldots\leq q_{k_\alpha}\]
alors nous obtenons le r\'esultat suivant:

\begin{cor}
\begin{enumerate}
\item Dans la filtration de l'espace des $1$-cocycles $Z^1(\pi,\CC_\alpha)$, on a:
$$B^{1}(\pi,\CC_{\alpha})= C_{q_{k_{\alpha}}+2}\subsetneq C_{q_{k_{\alpha}}+1}\subseteq
C_{q_{k_{\alpha}}}\subseteq\cdots\subseteq
C_{2}=Z^{1}(\pi,\CC_{\alpha})\,.$$

\item
La codimension du sous-espace $C_{p}$ dans $C_{p-1}$ est
\'egale au nombre des $q_{i}$ \'egaux \`a $p-2$, c'est-\`a-dire
$$\dim C_{p-1}-\dim C_{p}= \card\{1\leq i\leq k_\alpha\ |\ q_{i}=p-2\}\,,\ \forall\ p\geq 3\,.$$
\end{enumerate}
\end{cor}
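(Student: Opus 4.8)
The plan is to deduce both parts directly from Theorem~\ref{thm:fond}; the only extra ingredient needed is the chain of inclusions $C_{n+1}\subseteq C_n$ for every $n\geq 2$, which Theorem~\ref{thm:fond} does not give on the nose.

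First I would prove this nesting by a block-truncation argument. Let $U\in C_{n+1}$ and choose $(v_2,\ldots,v_n)$ with $\rho_{n+1}^{(U,v_2,\ldots,v_n)}\in\hom(\pi,G_{n+1})$. Taking the top-left $n\times n$ block of a matrix is a group homomorphism $G_{n+1}\to G_n$ (for upper triangular matrices $A,B$ the top-left $k\times k$ block of $AB$ is the product of the top-left $k\times k$ blocks, since the only indices $l$ contributing to $(AB)_{ij}$ with $i,j\leq k$ satisfy $i\leq l\leq j\leq k$). Applied to $\rho_{n+1}^{(U,v_2,\ldots,v_n)}(\gamma)$, this block is exactly $\rho_n^{(U,v_2,\ldots,v_{n-1})}(\gamma)$: the corner entry $\alpha^{|\gamma|}$ and the cochains $v_1=U,v_2,\ldots,v_{n-1}$ are unchanged, and the top-left $(n-1)\times(n-1)$ block of $\phi_n(\gamma)$ is $\phi_{n-1}(\gamma)$ because $\phi$ is the abelian representation determined by its value on $\mu$ and the corner block of $\phi_n(\mu)$ is $\phi_{n-1}(\mu)$. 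Hence $\rho_n^{(U,v_2,\ldots,v_{n-1})}\in\hom(\pi,G_n)$, i.e. $U\in C_n$. Recalling that each $C_n$ is a linear subspace of $Z^1(\pi,\CC_\alpha)$ containing $B^1(\pi,\CC_\alpha)$, we get $B^1(\pi,\CC_\alpha)\subseteq\cdots\subseteq C_{n+1}\subseteq C_n\subseteq\cdots\subseteq C_2$. It remains to note $C_2=Z^1(\pi,\CC_\alpha)$: for $n=2$ the tuple of auxiliary cochains is empty, and $\rho_2^{(U)}$ is a homomorphism precisely when $U$ satisfies the crossed-homomorphism identity of $\CC_\alpha$, that is $U\in Z^1(\pi,\CC_\alpha)$.

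For part (1) I would then locate where the filtration stabilizes at $B^1$. By Theorem~\ref{thm:fond}, $B^1(\pi,\CC_\alpha)\subsetneq C_n$ iff some $q_i\geq n-1$; with the ordering $q_1\leq\cdots\leq q_{k_\alpha}$ this means $q_{k_\alpha}\geq n-1$, so $B^1(\pi,\CC_\alpha)=C_n$ iff $q_{k_\alpha}\leq n-2$, i.e. $n\geq q_{k_\alpha}+2$. Thus $C_{q_{k_\alpha}+2}=B^1(\pi,\CC_\alpha)$ while $B^1(\pi,\CC_\alpha)\subsetneq C_{q_{k_\alpha}+1}$, and combining with the nesting and $C_2=Z^1(\pi,\CC_\alpha)$ gives
\[B^1(\pi,\CC_\alpha)=C_{q_{k_\alpha}+2}\subsetneq C_{q_{k_\alpha}+1}\subseteq C_{q_{k_\alpha}}\subseteq\cdots\subseteq C_2=Z^1(\pi,\CC_\alpha).\]

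For part (2): since $C_p\subseteq C_{p-1}$ are finite-dimensional, the codimension of $C_p$ in $C_{p-1}$ equals $\dim C_{p-1}-\dim C_p$. The dimension formula of Theorem~\ref{thm:fond} gives $\dim C_{p-1}=1+\card\{1\leq i\leq k_\alpha\mid q_i\geq p-2\}$ and $\dim C_p=1+\card\{1\leq i\leq k_\alpha\mid q_i\geq p-1\}$, whose difference is $\card\{1\leq i\leq k_\alpha\mid q_i=p-2\}$, as claimed. I do not expect a genuine obstacle: the corollary is essentially a repackaging of Theorem~\ref{thm:fond}, and the only point needing care is the block-truncation argument above together with keeping the indices consistent with the identification of $\phi_{n-1}$ as the top-left corner of $\phi_n$; note also that $\alpha\neq 1$ (since $\Delta_K(1)=\pm1$), so no degenerate case arises for $B^1(\pi,\CC_\alpha)$.
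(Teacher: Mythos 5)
Your proof is correct. The paper gives no explicit proof of this corollary, treating it as an immediate consequence of the Th\'eor\`eme~\ref{thm:fond}; but, as you rightly observe, the theorem as stated (an equivalence characterizing when $B^1(\pi,\CC_\alpha)\subsetneq C_n$, plus a dimension count) does not by itself yield the inclusions $C_{n+1}\subseteq C_n$, and you supply the missing ingredient with the block-truncation argument: the top-left $n\times n$ block of $\rho_{n+1}^{(U,v_2,\ldots,v_n)}(\gamma)$ is exactly $\rho_n^{(U,v_2,\ldots,v_{n-1})}(\gamma)$, since truncation to the leading principal block is multiplicative on upper triangular matrices and the corner block of $\phi_n$ is $\phi_{n-1}$. (Equivalently, and perhaps more in the spirit of paragraph~\ref{sec:eqobs}, one can simply drop the last equation of the obstruction system $(S)$: a solution $(v_1,\ldots,v_n)$ restricts to a solution $(v_1,\ldots,v_{n-1})$.) The paper's implicit route differs only at this point: its proof of the theorem actually establishes the finer equality $C_n=B^1(\pi,\CC_\alpha)\oplus\vect\{U_i\ |\ q_i\geq n-1\}$ in the fixed basis $(\{U_1\},\ldots,\{U_{k_\alpha}\})$ of $H^1(\pi,\CC_\alpha)$, from which the nesting, the stabilization at $B^1(\pi,\CC_\alpha)$ for $n\geq q_{k_\alpha}+2$, and the codimension count all follow at once. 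Your deduction of parts (1) and (2) from the equivalence and the dimension formula of the theorem is then routine and correct, including the observation that $\alpha\neq 1$ (as $\Delta_K(1)=\pm 1$), which guarantees $\dim B^1(\pi,\CC_\alpha)=1$ and makes the constant $1$ in the dimension formula cancel in the difference.
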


La suite descendante des sous-espaces vectoriels $(C_p)_{p\geq 2}$ ainsi d\'ecrite nous permet donc de retrouver la d\'ecomposition du module d'Alexander \`a c\oe fficients complexes du n\oe ud.

Remarquons qu'une correction de $U\in Z^1(\pi,\CC_\alpha)$ par un cobord se traduit par une conjugaison de $\rho_n^{(U,v_2,\ldots,v_{n-1})}$ et posons
\[\overline{C}_n=\{ \{U\}\in H^{1}(\pi,\CC_{\alpha})\ |\ \exists\ (v_{2},\ldots,v_{n-1})\in(C^{1}(\pi,\CC_{\alpha}))^{n-2},\]
 
 \[\ \rho_{n}^{(U,v_{2},\ldots,v_{n-1})}\in\hom(\pi,G_n)\}\,.\]
 Alors, on a:
 \begin{cor}
 \begin{enumerate}
 \item L'espace vectoriel $\overline{C}_n$ est non r\'eduit \`a z\'ero si et seulement s'il existe $1\leq i\leq k_\alpha$ tel que $q_i\geq n-1$.
 \item Dans la filtration de l'espace de la premi\`ere classe de cohomologie $H^1(\pi,\CC_\alpha)$, on a:
 \[\{0\}=\overline{C}_{q_{k_{\alpha}}+2}\subsetneq \overline{C}_{q_{k_{\alpha}}+1}\subseteq
\overline{C}_{q_{k_{\alpha}}}\subseteq\cdots\subseteq
\overline{C}_{2}=H^{1}(\pi,\CC_{\alpha})\,.\]
\item
La codimension du sous-espace $\overline{C}_{p}$ dans $\overline{C}_{p-1}$ est
\'egale au nombre des $q_{i}$ \'egaux \`a $p-2$, c'est-\`a-dire
$$\dim \overline{C}_{p-1}-\dim \overline{C}_{p}= \card\{1\leq i\leq k_\alpha\ |\ q_{i}=p-2\}\,,\ \forall\ p\geq 3\,.$$
\end{enumerate}
\end{cor}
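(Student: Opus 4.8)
Le plan est de tout ramener au th\'eor\`eme~\ref{thm:fond} et au premier corollaire en identifiant $\overline{C}_n$ avec le quotient $C_n/B^1(\pi,\CC_\alpha)$. La premi\`ere \'etape, et le seul point v\'eritablement \`a \'etablir, consiste \`a pr\'eciser la remarque qui pr\'ec\`ede l'\'enonc\'e. Fixons $U\in C_n$, associ\'e \`a un uplet $(v_2,\ldots,v_{n-1})$ de $1$-cocha\^ines tel que $\rho_n^{(U,v_2,\ldots,v_{n-1})}$ appartienne \`a $\hom(\pi,G_n)$, ainsi qu'un cobord $b\in B^1(\pi,\CC_\alpha)$, $b(\gamma)=(\alpha^{|\gamma|}-1)c$ avec $c\in\CC_\alpha$. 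En conjuguant $\rho_n^{(U,v_2,\ldots,v_{n-1})}$ par la matrice de $G_n$ \'egale \`a la matrice identit\'e sauf le coefficient $-c$ en position $(1,2)$, on v\'erifie par un calcul direct par blocs, sachant que la premi\`ere colonne de $\phi_{n-1}(\gamma)$ est $e_1$, que l'on obtient une repr\'esentation de la forme $\rho_n^{(U+b,v_2',\ldots,v_{n-1}')}$. Un conjugu\'e d'un homomorphisme \'etant un homomorphisme, ceci montre que $U+b\in C_n$, donc que la condition d\'efinissant $\overline{C}_n$ ne d\'epend pas du cocycle repr\'esentant choisi. Ainsi $\overline{C}_n$ est l'image de $C_n$ par la projection $Z^1(\pi,\CC_\alpha)\to H^1(\pi,\CC_\alpha)$ et, puisque $B^1(\pi,\CC_\alpha)\subseteq C_n$ d'apr\`es le th\'eor\`eme~\ref{thm:fond}, cette image s'identifie \`a $C_n/B^1(\pi,\CC_\alpha)$.

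Il faut ensuite d\'eterminer $\dim_\CC B^1(\pi,\CC_\alpha)$. Le premier corollaire donne $B^1(\pi,\CC_\alpha)=C_{q_{k_\alpha}+2}$, et la formule de dimension du th\'eor\`eme~\ref{thm:fond} fournit $\dim_\CC C_{q_{k_\alpha}+2}=1+\card\{1\leq i\leq k_\alpha\ |\ q_i\geq q_{k_\alpha}+1\}=1$, $q_{k_\alpha}$ \'etant le plus grand des $q_i$. Donc $\dim_\CC B^1(\pi,\CC_\alpha)=1$, et en r\'einjectant la formule du th\'eor\`eme~\ref{thm:fond}, on obtient
\[\dim_\CC\overline{C}_n=\dim_\CC C_n-1=\card\{1\leq i\leq k_\alpha\ |\ q_i\geq n-1\}\,.\]

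Les trois assertions s'en d\'eduisent alors de mani\`ere purement formelle. Pour~(1), $\overline{C}_n\neq\{0\}$ \'equivaut \`a $\dim_\CC\overline{C}_n\geq 1$, c'est-\`a-dire \`a l'existence d'un indice $i$ avec $q_i\geq n-1$; c'est du reste exactement l'assertion $B^1(\pi,\CC_\alpha)\subsetneq C_n$ du th\'eor\`eme~\ref{thm:fond}, puisque $\overline{C}_n=C_n/B^1(\pi,\CC_\alpha)$. Pour~(2), en appliquant la projection $Z^1(\pi,\CC_\alpha)\to H^1(\pi,\CC_\alpha)$ \`a la cha\^ine $B^1(\pi,\CC_\alpha)=C_{q_{k_\alpha}+2}\subsetneq C_{q_{k_\alpha}+1}\subseteq\cdots\subseteq C_2=Z^1(\pi,\CC_\alpha)$ du premier corollaire, on obtient la cha\^ine annonc\'ee, avec $\overline{C}_{q_{k_\alpha}+2}=\{0\}$ et $\overline{C}_2=H^1(\pi,\CC_\alpha)$; de plus l'inclusion $\overline{C}_{q_{k_\alpha}+2}\subsetneq\overline{C}_{q_{k_\alpha}+1}$ est stricte d'apr\`es~(1), car $q_{k_\alpha}\geq(q_{k_\alpha}+1)-1$. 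Pour~(3), le passage au quotient par l'espace $B^1(\pi,\CC_\alpha)$, fixe et de dimension $1$, abaisse de $1$ chacune des dimensions, de sorte que $\dim_\CC\overline{C}_{p-1}-\dim_\CC\overline{C}_p=\dim_\CC C_{p-1}-\dim_\CC C_p$, ce qui vaut $\card\{1\leq i\leq k_\alpha\ |\ q_i=p-2\}$ d'apr\`es la partie~(2) du premier corollaire. L'unique difficult\'e r\'eside donc dans la mise au point de cette premi\`ere \'etape, l'argument de conjugaison donnant $\overline{C}_n=C_n/B^1(\pi,\CC_\alpha)$; mais c'est en substance la remarque d\'ej\`a \'etablie, et tout le reste se ram\`ene \`a un d\'ecompte de dimensions tir\'ees du th\'eor\`eme~\ref{thm:fond} et de son corollaire.
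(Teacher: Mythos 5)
Votre preuve est correcte et suit exactement la d\'emarche que le papier sous-entend (il ne donne d'ailleurs aucune preuve s\'epar\'ee de ce corollaire) : la v\'erification par conjugaison que $U+b\in C_n$ pour $b\in B^1(\pi,\CC_\alpha)$ est pr\'ecis\'ement le contenu de la remarque qui pr\'ec\`ede l'\'enonc\'e --- et d\'ecoule d'ailleurs aussi directement du fait, \'etabli au paragraphe~\ref{sec:eqobs}, que $C_n$ est un sous-espace vectoriel contenant $B^1(\pi,\CC_\alpha)$ ---, de sorte que $\overline{C}_n=C_n/B^1(\pi,\CC_\alpha)$, puis tout se ram\`ene au th\'eor\`eme~\ref{thm:fond} et au premier corollaire par un d\'ecompte de dimensions, en utilisant $\dim B^1(\pi,\CC_\alpha)=1$ (valable car $\alpha\neq 1$). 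Rien \`a redire.
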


Les repr\'esentations $\rho_n^V$ sont m\'etab\'eliennes, c'est-\`a-dire leurs restrictions au deuxi\`eme sous-groupe des commutateurs $\pi''=[\pi',\pi']$ sont triviales. Un travail r\'ecent de~\cite{BF08} a port\'e sur la classification des repr\'esentations m\'etab\'eliennes irr\'eductibles du groupe d'un n\oe ud dans $SL(n,\CC)$ et $GL(n,\CC)$.

\begin{rk}
La matrice $\rho_n^V(\gamma)$, pour $\gamma$ dans $\pi$, est un syst\`eme de d\'efinition du produit de Massey $< U,\underbrace{h_1,\ldots,h_1}_{(n-2)}>$ (voir~\cite{Kra66} et~\cite{FS87}). L'ensemble $\overline{C}_n$ peut \^etre d\'efini en fonction du produit de Massey:
\[\overline{C}_n=\{\{U\}\in H^1(\pi,\CC_\alpha)\mid < U,\underbrace{h_1,\ldots,h_1}_{(n-2)}>=0\}\,.\]
\end{rk}

\section{Equations d'obstruction}
\label{sec:eqobs}

Soient $\Gamma$  un groupe de pr\'esentation finie et $M$ un $\Gamma$-module \`a gauche. L'espace des $n$-cocha\^ines $C^n(\Gamma,M)$ du groupe $\Gamma$ dans le module $M$, pour $n\geq 0$, est par d\'efiniton l'espace des fonctions $f$ de $\Gamma^n$ dans $M$. L'op\'erateur $\delta\co C^n(\Gamma,M)\to C^{n+1}(\Gamma,M)$ est donn\'e en petites dimensions par~\cite{Bro82}:
\[\delta a(\gamma)=\gamma\cdot a-a,\ \text{pour tout}\ a\in M\,,\]
\[\delta f(\gamma_1,\gamma_2)=\gamma_1\cdot f(\gamma_2)-f(\gamma_1\gamma_2)+f(\gamma_1),\ \text{pour tout}\ f\in C^1(\Gamma,M)\,.\]

Si $U$ d\'esigne un cocycle dans $Z^i(\Gamma,M)$, $i\geq 1$, on notera par $\{U\}$ sa classe de cohomologie dans $H^i(\Gamma,M)$.

Soient $\alpha,\ \beta\in\CC^*$, les $\Lambda$-modules $\CC_\alpha$ et $\CC_\beta$ sont munis de la structure de $\pi$-modules via la projection $\pi\twoheadrightarrow T$ et l'action est donn\'ee par
\[\gamma\cdot z=\alpha^{|\gamma|}z\,,\quad\forall\ \gamma\in\pi\ \text{et}\ \forall\ z\in\CC\,.\]
Si $f\in C^1(\pi,\CC_\alpha)$ et $g\in C^1(\pi,\CC_\beta)$ d\'esignent deux cocha\^ines \`a valeurs dans les $\pi$-modules $\CC_\alpha$ et $\CC_\beta$, alors on notera $f\cup g\in C^2(\pi,\CC_{\alpha\beta})$ leur produit-cup donn\'e par:
\[f\cup g(\gamma_1,\gamma_2)=f(\gamma_1)\beta^{|\gamma_1|}g(\gamma_2)\,,\ \gamma_1,\ \gamma_2\in\pi\,.\]

Notons $N_m$ le groupe des matrices triangulaires sup\'erieures d'ordre $m$ avec $1$ sur la diagonale. Le groupe $N_m$ \'etant un groupe de Lie nilpotent, toutes les repr\'esentations de $\pi$ \`a valeurs dans $N_m$ sont ab\'eliennes. Ce r\'esultat peut \^etre retrouv\'e en utilisant un r\'esultat classique de Milnor~\cite{Mil54}. En effet, Milnor montre que la suite centrale descendante du groupe d'un n\oe ud est stationnaire. Plus pr\'ecis\'ement, il montre que $\mathcal{C}^k\pi=\mathcal{C}^1\pi=\pi'$, pour tout $k\geq 1$, o\`u $(\mathcal{C}^k\pi)_{k\geq 0}$ d\'esigne la suite centrale descendante de $\pi$.

Puisque $\pi/\pi'\simeq T=\left\langle t|-\right\rangle$, toute repr\'esentation ab\'elienne se factorise par $T$ et est compl\`etement d\'etermin\'ee par la donn\'ee de l'image de $\mu$.
Consid\'erons $\phi_{m}\co\pi\to N_{m}$ l'homomorphisme
ab\'elien d\'efini par
 
 \[\phi_m(\gamma)=\begin{pmatrix}
h_0(\gamma)&h_1(\gamma)&h_2(\gamma)&\cdots&h_{m-1}(\gamma)\\
0&h_0(\gamma)&h_1(\gamma)&\ddots&\vdots\\
\vdots& \ddots&\ddots&\ddots &h_2(\gamma)\\
\vdots& &\ddots &h_0(\gamma)&h_1(\gamma)\\
 0&\cdots&\cdots
 &0&h_0(\gamma) \end{pmatrix}\]
 et
\[\phi_m(\mu)=\begin{pmatrix}
1&1&0&\cdots&0\\
0&1&1&\ddots&\vdots\\
\vdots& \ddots&\ddots&\ddots &0\\
\vdots& &\ddots &1&1\\
 0&\cdots&\cdots
 &0&1 \end{pmatrix}=I_{m}+\mathcal{N}_{m}\]
 o\`u $I_{m}$ d\'esigne la matrice identit\'e.
 Alors, pour tout $k\in\ZZ$: 

 \begin{equation}\label{repab}
 \phi_{m}(\mu^{k})=(I_{m}+\mathcal{N}_{m})^{k}
 =\sum_{p=0}^{k}\binom{k}{p}\mathcal{N}_{m}^{p}
 = \sum_{p=0}^{k}h_{p}(\mu^{k})\mathcal{N}_{m}^{p}
 \end{equation} 
o\`u  $\displaystyle\binom{k}{p}$, $p\in\NN$, d\'esigne le
c\oe fficient binomial d\'efini par 
\[\binom{k}{0}:=1 \text{ et } \binom{k}{p}
:= \frac{k(k-1)\cdots (k-p+1)}{p!}\in\ZZ\,.\]
Les applications $h_p\co\pi\to\CC$, $p\geq 1$, sont solutions des \'equations d'obstruction:
\[\delta h_p+\displaystyle\sum_{i=1}^{p-1}h_i\cup h_{p-i}=0\]
et $h_0\in\hom(\pi,\CC^*)$.

Soit $n\geq 2$ et soit $\rho_n^V\co\pi\to G_n$ d\'efinie par

\[ \rho_{n}^{V} (\gamma) =\left(\begin{array}{c|cc}
\alpha^{|\gamma|}&V(\gamma)\\
\hline 0& \phi_{n-1}(\gamma)&
\end{array}\right)\]
Alors, $\phi_{n-1}$ \'etant un homomorphisme ab\'elien, une telle repr\'esentation $\rho_n^V$, lorsqu'elle existe, est m\'etab\'elienne, c'est-\`a-dire sa restriction au deuxi\`eme sous-groupe des commutateurs $\pi''=[\pi',\pi']$ est triviale. En effet, pour le prouver, il suffit de v\'erifier que $\rho_n^V|_{\pi'}$ est ab\'elienne. Or, pour tout $\gamma\in\pi'$, $\rho_n^V(\gamma)$ est une matrice de la forme
\[\begin{pmatrix}
1&*\\
0&I_{n-1}
\end{pmatrix}\]
car la repr\'esentation obtenue \`a partir de $\rho_n^V$ en supprimant la premi\`ere ligne et la premi\`ere colonne est ab\'elienne et deux matrices de cette forme commutent. Donc $\rho_n^V$ est m\'etab\'elienne.

Maintenant, une condition n\'ecessaire et suffisante pour que $\rho_n^V$ soit un homomorphisme se traduit par $\rho_n^V(\gamma_1\gamma_2)=\rho_n^V(\gamma_1)\rho_n^V(\gamma_2)$, pour tous $\gamma_1,\ \gamma_2\in\pi$, et est \'equivalente \`a
\begin{equation}\label{equ:hom}
V(\gamma_{1}\gamma_{2})=\alpha^{|\gamma_{1}|}V(\gamma_{2})+V(\gamma_{1})\phi_{n-1}(\gamma_{2})\,,\quad\forall\ \gamma_{1},\gamma_{2}\in\pi.
\end{equation}

On en d\'eduit que $C_n$ est un sous-espace vectoriel de $Z^1(\pi,\CC_\alpha)$. De plus, remarquons que s'il existe $b_0\in C^0(\pi,\CC_\alpha)$ tel que $v_1=\delta b_0$, alors $v_i=-b_0\cup h_{i-1}$, pour $2\leq i\leq n-1$, donne un $(n-1)$-uplet $V=(v_1,\ldots,v_{n-1})$ solution de~(\ref{equ:hom}). Donc $C_n$ est un sous-espace vectoriel de $Z^1(\pi,\CC_\alpha)$ qui contient l'ensemble des $1$-cobords $B^1(\pi,\CC_\alpha)$.

La condition d'homomorphie (\ref{equ:hom}) de $\rho^V_{n}$ est \'equivalente au syst\`eme suivant
\[ (S)\left\{\begin{array}{lc}
v_{1}\in Z^{1}(\pi,\CC_{\alpha})\\
-\delta v_{i}=\displaystyle\sum_{p=1}^{i-1}v_{p}\cup h_{i-p}& ,\forall\ 2\leq i\leq n-1
\end{array}\right.\]
(voir aussi \cite{FS87}).

En conclusion, nous cherchons un vecteur $V=(v_1,v_2,\ldots,v_{n-1})$ de l'espace vectoriel $(C^1(\pi,\CC_\alpha))^{n-1}$ v\'erifiant
\begin{equation}\label{eqobs}
\delta v_{i}+\displaystyle\sum_{p=1}^{i-1}v_{p}\cup h_{i-p}=0,\ \text{pour}\ i=1,\ldots,n-1
\end{equation}

Comme $H^{2}(\pi,\CC_{\alpha})=0$ si et seulement si $\Delta_{K}(\alpha)\neq 0$ \cite[prop.~2.1]{BA00}, un tel vecteur existe lorsque $\alpha$ n'est pas racine du polyn\^ome d'Alexander. Dans le cas o\`u $\alpha$ est racine du polyn\^ome d'Alexander du n\oe ud, nous utiliserons le r\'esultat suivant pour la r\'esolution des \'equations d'obstruction.

\begin{prop}\label{propphi}
Soit $n\geq 2$ et soit $\alpha\in\CC^{*}\backslash\{1\}$ une racine du polyn\^ome d'Alexander du n\oe{}ud. 

Alors, il existe un uplet $V=(v_{1},\ldots,v_{n-1})$ vérifiant (\ref{eqobs}) si et seulement s'il existe une famille
d'homomorphismes de groupes 
\[ \varphi_{i}\co\pi'/\pi''\to (\CC,+),\quad 1\leq i\leq n-1\]  
v\'erifiant 
\begin{equation}\label{applin}
\varphi_{i}(\mu^{k}y\mu^{-k})=\alpha^{k}\displaystyle\sum_{p=1}^{i}\binom{-k}{i-p}\varphi_{p}(y), \forall\ y\in\pi'/\pi''\ \text{et}\ \forall\ k\in\ZZ\,.
\end{equation}
\end{prop}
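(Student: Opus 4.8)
The plan is to translate the obstruction equations~(\ref{eqobs}) from the language of cochains on $\pi$ into the language of homomorphisms $\pi'/\pi''\to(\CC,+)$ by restricting the cochains $v_i$ to the commutator subgroup $\pi'$. The key structural fact is that a representation $\rho_n^V$ of the form above is metabelian, so each $v_i$ restricted to $\pi'$ is relevant only through its behaviour on $\pi'/\pi''$. First I would observe that equation~(\ref{equ:hom}) for $\gamma_1,\gamma_2\in\pi'$ reduces (since $\alpha^{|\gamma|}=1$ and $\phi_{n-1}(\gamma)=I_{n-1}$ for $\gamma\in\pi'$) to $V(\gamma_1\gamma_2)=V(\gamma_1)+V(\gamma_2)$, so that $V|_{\pi'}$ is a homomorphism $\pi'\to\CC^{n-1}$; and since $\pi'$ is perfect modulo $\pi''$ and $\CC^{n-1}$ is abelian, this homomorphism factors through $\pi'/\pi''$. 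Writing $\varphi_i:=v_i|_{\pi'}$ viewed as a map $\pi'/\pi''\to(\CC,+)$, this gives the family of homomorphisms in the statement.

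Next I would derive the transformation law~(\ref{applin}). The point is to compute $v_i(\mu^k y\mu^{-k})$ for $y\in\pi'$ using the cocycle-type identity~(\ref{equ:hom}) (or equivalently the system $(S)$). Expanding $V(\mu^k y\mu^{-k})$ via~(\ref{equ:hom}) step by step, one uses $\phi_{n-1}(\mu^{-k})=(I_{n-1}+\mathcal N_{n-1})^{-k}=\sum_p\binom{-k}{p}\mathcal N_{n-1}^p=\sum_p h_p(\mu^{-k})\mathcal N_{n-1}^p$ from~(\ref{repab}), together with $\alpha^{|\mu^k|}=\alpha^k$ and $|y|=0$, and the fact that the terms $V(\mu^k)$ and $V(\mu^{-k})$ cancel against each other after conjugation because $y\in\pi'$ contributes nothing to the diagonal part. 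Reading off the $i$-th coordinate of the resulting matrix identity and using that $h_{i-p}(\mu^{-k})=\binom{-k}{i-p}$ yields exactly $\varphi_i(\mu^k y\mu^{-k})=\alpha^k\sum_{p=1}^{i}\binom{-k}{i-p}\varphi_p(y)$.

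For the converse direction, given a family $(\varphi_i)_{1\le i\le n-1}$ satisfying~(\ref{applin}), I would reconstruct cochains $v_i\in C^1(\pi,\CC_\alpha)$ on all of $\pi$ whose restriction to $\pi'$ is $\varphi_i$ and which satisfy~(\ref{eqobs}). Here one uses that $\pi$ is generated by $\mu$ together with $\pi'$: every $\gamma\in\pi$ can be written as $\mu^{|\gamma|}g$ with $g\in\pi'$, and one sets $v_i(\mu^{|\gamma|}g)$ by the formula forced by~(\ref{equ:hom}), namely a combination of the (freely chosen) values $v_i(\mu)$ and the $\varphi_j(g)$ with binomial coefficients. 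The compatibility condition~(\ref{applin}) is precisely what is needed for this prescription to be well-defined and to satisfy the homomorphism relation, hence~(\ref{eqobs}). I expect the main obstacle to be this converse: one must check carefully that the assignment on $\pi$ is consistent with \emph{all} relations (not just conjugation of $\pi'$ by $\mu$), i.e. that the naive extension off $\pi'$ genuinely solves the full system $(S)$; the bookkeeping with the nested sums $\sum_{p=1}^{i-1}v_p\cup h_{i-p}$ and the binomial identities $\sum\binom{-k}{a}\binom{-l}{b}=\binom{-k-l}{a+b}$ is where the real work lies, though it is ultimately a routine (if delicate) induction on $i$.
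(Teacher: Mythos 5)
Your proposal is correct and follows essentially the same route as the paper: restrict the $v_i$ to $\pi'$, where~(\ref{equ:hom}) degenerates to additivity so that they factor through the abelianization $\pi'/\pi''$ (this, rather than ``$\pi'$ perfect modulo $\pi''$'', is the right justification), derive~(\ref{applin}) from the conjugation computation $V(\mu^k\gamma\mu^{-k})=\alpha^kV(\gamma)\phi_{n-1}(\mu^{-k})$, and conversely extend the $\varphi_i$ to $\pi$ using the decomposition of any element as $\gamma t^k$ with $\gamma\in\pi'$. The only substantive remark is that the ``delicate induction on $i$'' and binomial bookkeeping you anticipate in the converse are unnecessary: the paper sets $V(\gamma t^k)=\boldsymbol{\Phi}([\gamma])J_{n-1}^{k}$ and checks~(\ref{equ:hom}) in a single matrix computation, all the coefficients $\binom{-k}{i-p}$ being already packaged into the powers of $J_{n-1}$ via~(\ref{repab}).
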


\begin{proof}
Commen\c cons par remarquer que l'\'equation~(\ref{applin}) peut se mettre sous la forme matricielle
\begin{equation}\label{Phi}
\boldsymbol{\Phi}(\mu^ky\mu^{-k})=\alpha^k\boldsymbol{\Phi}(y)J_{n-1}^{-k}\ ,\quad\forall\ y\in\pi'/\pi''\ \text{et}\ \forall\ k\in\ZZ
\end{equation}
o\`u $\boldsymbol{\Phi}$ d\'esigne le vecteur ligne $(\varphi_1,\ldots,\varphi_{n-1})\co\pi'/\pi''\to\CC^{n-1}$ et $J_{n-1}:=\phi_{n-1}(\mu)$ d\'esigne la matrice introduite au d\'ebut de ce paragraphe.

Supposons que $V$ existe. Rappelons que s'il existe $b_0\in C^0(\pi,\CC_\alpha)$ tel que $v_1=\delta b_0$ alors $v_i=-b_0\cup h_{i-1}$, $2\leq i\leq n-1$, donne une famille de $1$-cocha\^ines v\'erifiant~(\ref{eqobs}). Plus g\'en\'eralement, si $v_1=\ldots=v_{k-1}=0$ et si $v_k=\delta b_0$, alors $v_{k+i}=-b_0\cup h_i$, pour $1\leq i\leq n-1$, donne une telle famille. Donc nous pouvons supposer que $v_i(\mu)=0$, pour tout $1\leq i\leq n-1$. Chacune des cocha\^ines $v_i$ (resp. $h_i$) est m\'etab\'elienne donc elle passe au quotient par $\pi''$ et d\'efinit ainsi une $1$-cocha\^ine de $\pi'/\pi''$ \`a valeurs dans $\CC_\alpha$ (resp. $\CC$). Consid\'erons, pour $1\leq i\leq n-1$, les applications $\varphi_i=v_i|_{\pi'/\pi''}$. Comme $V$ v\'erifie~(\ref{equ:hom}), pour tout $\gamma\in\pi'$ et tout $k\in\ZZ$, on a:
\begin{align*}
V(\mu^k\gamma\mu^{-k})&=\alpha^kV(\mu^{-k})+V(\mu^k\gamma)\phi_{n-1}(\mu^{-k})\\
&=(\alpha^kV(\gamma)+V(\mu^k)\phi_{n-1}(\gamma))\phi_{n-1}(\mu^{-k})\\
&=\alpha^kV(\gamma)\phi_{n-1}(\mu^{-k})\,.
\end{align*}

D'o\`u
\[\boldsymbol{\Phi}(\mu^k[\gamma]\mu^{-k})=\alpha^k\boldsymbol{\Phi}([\gamma])\phi_{n-1}(\mu^{-k})\,,\]
o\`u $[\gamma]$ d\'esigne la classe de $\gamma$ dans $\pi'/\pi''$.

R\'eciproquement, supposons qu'il existe une famille d'homomorphismes de groupes $\varphi_i\co\pi'/\pi''\to\CC$, $1\leq i\leq n-1$, telles que $\boldsymbol{\Phi}=(\varphi_1,\ldots,\varphi_{n-1})$ v\'erifie~(\ref{Phi}) et posons
\[V(\gamma t^k)=\boldsymbol{\Phi}([\gamma])J_{n-1}^k\ ,\quad\forall\ \gamma\in\pi'\ \text{et}\ \forall\ k\in\ZZ\,.\]

Alors, pour tous $(\gamma_1t^{k_1}),(\gamma_2t^{k_2})\in\pi'\rtimes T$, on a:
\begin{align*}
V((\gamma_1t^{k_1})(\gamma_2t^{k_2}))&=V((\gamma_1+t^{k_1}\gamma_2)t^{k_1+k_2})\\
&=\boldsymbol{\Phi}([\gamma_1+t^{k_1}\gamma_2])\phi_{n-1}(\mu^{k_1+k_2})\\
&=(\boldsymbol{\Phi}([\gamma_1])+\boldsymbol{\Phi}(t^{k_1}[\gamma_2]))\phi_{n-1}(\mu^{k_1+k_2})\\
&=\boldsymbol{\Phi}([\gamma_1])\phi_{n-1}(\mu^{k_1+k_2})+\boldsymbol{\Phi}(t^{k_1}[\gamma_2])\phi_{n-1}(\mu^{k_1+k_2})\\
&=\boldsymbol{\Phi}([\gamma_1])\phi_{n-1}(\mu^{k_1+k_2})+\alpha^{k_1}\boldsymbol{\Phi}([\gamma_2])\phi_{n-1}(\mu^{k_2})\\
&=V(\gamma_1t^{k_1})\phi_{n-1}(\mu^{k_2})+\alpha^{k_1}V(\gamma_2t^{k_2})\,.
\end{align*}
Ainsi $V$ v\'erifie~(\ref{equ:hom}).
\end{proof}

\section{D\'ecomposition du module d'Alexander}
\label{sec:decomp}

Dans ce paragraphe, nous utilisons les sous-espaces vectoriels $C_n$ pour retrouver la d\'ecomposition du module d'Alexander \`a c\oe fficients complexes du n\oe ud. Notons qu'un raisonnement analogue \`a celui que nous pr\'esentons dans la suite de ce paragraphe a \'et\'e, r\'ecemment, utilis\'e par~\cite{BF08} pour montrer l'existence des repr\'esentations m\'etab\'eliennes r\'eductibles fid\`eles du groupe du n\oe ud dans $GL(n,\CC)$.

D'apr\`es~\cite{BZ85} et \cite{Gor78}, le premier groupe d'homologie $H_1(X^\infty,\CC)$ est un $\CC$-espace vectoriel de dimension finie, qui est isomorphe \`a $\pi'/\pi''\otimes\CC$. Ce dernier peut \^etre muni de la structure de $\Lambda$-module via l'action
\[t\cdot(y\otimes z)=(\mu y\mu^{-1})\otimes z\,,\quad\forall\ y\in\pi'/\pi''\ \text{et}\ \forall\ z\in\CC\,.\]
Rappelons que le module d'Alexander \`a c\oe fficients complexes d'un n\oe ud $K$ de $S^3$ se d\'ecompose sous la forme
\[
H_{1}(X^{\infty},\CC)=\displaystyle\bigoplus_{\Delta_{K}(\beta)=0}\tau_{\beta},
\text{ avec }
\tau_{\beta}=\displaystyle\bigoplus_{i=1}^{k_{\beta}}\tau_{\beta}^{i},
\text{ et }
\tau_{\beta}^{i}=\displaystyle\frac{\Lambda}{(t-\beta)^{q_{i}}},
q_{i}\in\NN^{*}
\]

Supposons maintenant que $\alpha$ est une racine du polyn\^ome d'Alexander. Puisque $\hom_\CC(\pi'/\pi''\otimes\CC,\CC_\alpha)\simeq\hom_\ZZ(\pi'/\pi'',\CC_\alpha)$, les applications $\varphi_i$ d\'ecrites dans la proposition~\ref{propphi} d\'efinissent, par extension des scalaires, des applications $\CC$-lin\'eaires $\varphi_i\co\pi'/\pi''\otimes\CC\to\CC_\alpha$ v\'erifiant~(\ref{applin}).

Le but des deux lemmes suivants est d'\'etablir un lien entre l'existence des applications $\varphi_i$ d\'ecrites dans la proposition~\ref{propphi} et les puissances $q_i$.

\begin{lemma}\label{Lemmecondnec}
Soit $n\geq 2$.

Soit $\varphi_i\co\pi'/\pi''\otimes\CC\to\CC_\alpha$, $1\leq i\leq n-1$, une famille d'applications $\CC$-lin\'eaires v\'erifiant~(\ref{applin}) et soit $\tau=\displaystyle\frac{\Lambda}{(t-\alpha)^q}$.

Si $q\leq n-2$, alors $\varphi_1|_{\tau}\equiv 0$.
\end{lemma}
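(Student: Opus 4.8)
The idea is to work directly with the defining relation~(\ref{applin}) restricted to the cyclic summand $\tau=\Lambda/(t-\alpha)^q$, and to extract from it enough linear constraints on $\varphi_1|_\tau$ to force it to vanish when $q$ is too small. Recall that on $\tau$ the operator $t$ acts with $(t-\alpha)^q=0$, so $N:=t-\alpha$ is nilpotent of index $q$ on $\tau$. The relation $\varphi_i(\mu^k y\mu^{-k})=\alpha^k\sum_{p=1}^i\binom{-k}{i-p}\varphi_p(y)$ reads, in terms of the $\Lambda$-action, as $\varphi_i(t^k\cdot y)=\alpha^k\sum_{p=1}^i\binom{-k}{i-p}\varphi_p(y)$ for all $y\in\tau$ and all $k\in\ZZ$. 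I would take $i=1$ first: there the sum has the single term $p=1$, giving $\varphi_1(t^k\cdot y)=\alpha^k\binom{-k}{0}\varphi_1(y)=\alpha^k\varphi_1(y)$, i.e. $\varphi_1\big((t^k-\alpha^k)\cdot y\big)=0$ for every $k$ and every $y\in\tau$.

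The heart of the argument is then to show that the submodule $\sum_{k\in\ZZ}(t^k-\alpha^k)\tau$ is all of $\tau$ — or at least that it contains a generator of $\tau$ over $\Lambda$, since $\varphi_1$ is $\Lambda$-linear into $\CC_\alpha$... more precisely $\varphi_1$ is $\CC$-linear and $\varphi_1(t\cdot y)=\alpha\varphi_1(y)$ already from the $k=1$ case, so $\varphi_1$ factors through the coinvariants $\tau/(t-\alpha)\tau=\tau/N\tau$. So it suffices to kill $\varphi_1$ on a lift of a generator of $\tau/N\tau$, which is one-dimensional, spanned by the image of a generator $e$ of $\tau=\Lambda e$. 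Write $t^k-\alpha^k=(\alpha+N)^k-\alpha^k=\sum_{j\geq 1}\binom{k}{j}\alpha^{k-j}N^j$ as an operator on $\tau$ (the sum is finite, $j\leq q-1$). For $k=1$ this is just $N$; taking suitable $\ZZ$-linear combinations of the operators $t^k-\alpha^k$ for $k=1,\dots,q-1$ — a Vandermonde-type argument on the binomial coefficients $\binom{k}{j}$ — one recovers each $\alpha^{k-j}N^j$, in particular $N$ itself, so $\sum_k(t^k-\alpha^k)\tau\supseteq N\tau$. Hence $\varphi_1(N\tau)=0$, and combined with the factoring above, $\varphi_1$ is determined by its value on the single generator $e\bmod N\tau$; but actually we want $\varphi_1\equiv0$ on all of $\tau$, not just $N\tau$. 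Here is where the hypothesis $q\leq n-2$ enters, via the higher equations $i=2,\dots,n-1$.

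So the second stage uses the equations for $i\geq 2$ to propagate the constraint from $\varphi_i$ down to $\varphi_1$. Fix $y=e$, a generator of $\tau$, and view the relation $\varphi_i(t^k\cdot e)=\alpha^k\sum_{p=1}^i\binom{-k}{i-p}\varphi_p(e)$ as expressing the "test vector" against each $\varphi_p$. Expanding $t^k\cdot e=(\alpha+N)^k e=\sum_{j=0}^{q-1}\binom kj\alpha^{k-j}N^je$ and writing $\varphi_p(N^je)=:c_{p,j}$, the left side is $\sum_j\binom kj\alpha^{k-j}c_{i,j}$ while the right side is $\alpha^k\sum_p\binom{-k}{i-p}c_{p,0}$. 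Matching these polynomial identities in $k$ (via the integrality of the binomials and a Vandermonde/finite-difference argument) yields a triangular system relating the $c_{i,j}$ for $0\le j\le q-1$ and $1\le i\le n-1$. Since $q\leq n-2$ means $n-1\geq q+1$, there are strictly more "levels" $i$ than the nilpotency length $q$ allows degrees of freedom $j$, and the triangular system forces $c_{1,0}=\varphi_1(e)=0$; together with $\varphi_1(N\tau)=0$ from the first stage this gives $\varphi_1|_\tau\equiv 0$. The main obstacle is this last bookkeeping: setting up the triangular system so that the index-shift built into the $N$-expansion on one side and the $\binom{-k}{i-p}$ on the other line up, and checking that $q\le n-2$ is exactly the inequality that makes the system overdetermined in the direction that kills $\varphi_1(e)$. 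I expect this to reduce to the standard fact that the matrix $\big(\binom{-k}{i-p}\big)$ is unipotent lower-triangular, so the recursion $c_{i,0}=\sum_{p<i}(\pm)\,(\text{stuff})\,c_{p,0}+(\text{terms in }c_{i,j},j\ge1)$ can be run, and the vanishing of the $c_{i,j}$ for $j\ge q$ (automatic, $N^q=0$) forces termination with $c_{1,0}=0$.
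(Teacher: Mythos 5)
Your two-stage plan --- use the $i=1$ case of $(\ref{applin})$ to get $\varphi_1((t-\alpha)\tau)=0$, then use the higher equations to kill $\varphi_1$ on a generator $e$ of $\tau$ --- is a viable route, and it differs from the paper's, which packages the $\varphi_i$ into the row vector $\boldsymbol{\Phi}=(\varphi_1,\ldots,\varphi_{n-1})$, iterates $\boldsymbol{\Phi}((t-\alpha)\cdot y)=\alpha\,\boldsymbol{\Phi}(y)(J_{n-1}^{-1}-I_{n-1})$ a total of $q$ times to obtain $0=\alpha^q\boldsymbol{\Phi}(y)(J_{n-1}^{-1}-I_{n-1})^q$ for all $y\in\tau$, and simply reads off the $(q+1)$-th entry of that row vector, which equals $(-\alpha)^q\varphi_1(y)$ --- an entry that exists precisely because $q+1\le n-1$. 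That argument takes three lines, needs no basis of $\tau$, and treats all $y$ at once. (Incidentally, your first stage is overbuilt: the case $k=1$ of the $i=1$ equation already gives $\varphi_1((t-\alpha)y)=0$ directly; no Vandermonde combination of the operators $t^k-\alpha^k$ is needed.)

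The genuine gap is in your second stage: the assertion that the ``triangular system forces $c_{1,0}=0$'' is never derived, and the mechanism you offer for it --- that $\bigl(\binom{-k}{i-p}\bigr)$ is unipotent lower-triangular so ``the recursion can be run'' --- does not deliver the conclusion: a unipotent triangular system is always uniquely solvable and by itself forces nothing to vanish; it would only express each $c_{i,0}$ in terms of the others. What actually closes the argument on your route is a degree count in $k$. Dividing your identity by $\alpha^k$ gives, for each $i$ and all $k\in\ZZ$,
\[
\sum_{j=0}^{q-1}\binom{k}{j}\alpha^{-j}\,\varphi_i\bigl((t-\alpha)^je\bigr)\;=\;\sum_{p=1}^{i}\binom{-k}{i-p}\,\varphi_p(e)\,,
\]
an equality of polynomial functions of $k$, hence of polynomials. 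Take $i=n-1$: the left side has degree at most $q-1\le n-3$, while on the right the term $p=1$ contributes $\binom{-k}{n-2}\varphi_1(e)$, of degree $n-2$ with leading coefficient $(-1)^{n-2}\varphi_1(e)/(n-2)!$, and every other term has degree at most $n-3$. Hence $\varphi_1(e)=0$, which together with $\varphi_1((t-\alpha)\tau)=0$ and the basis $\{(t-\alpha)^je\}_{0\le j\le q-1}$ of $\tau$ gives the lemma. Until some such step is supplied, the proposal does not prove the statement.
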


\begin{proof}
Notons $\boldsymbol{\Phi}:=(\varphi_1,\ldots,\varphi_{n-1})$, alors $\boldsymbol{\Phi}$ v\'erifie~(\ref{Phi}), pour tout $y\in\pi'/\pi''\otimes\CC$,
\[\boldsymbol{\Phi}((t-\alpha)\cdot y)=\alpha\boldsymbol{\Phi}(y)(J_{n-1}^{-1}-I_{n-1})\]
et pour tout $y\in\tau$,
\[0=\boldsymbol{\Phi}((t-\alpha)^q\cdot y)=\alpha^q\boldsymbol{\Phi}(y)(J_{n-1}^{-1}-I_{n-1})^q\,.\]

Supposons que $q<n-1$, alors la $(q+1)$-i\`eme composante de $\alpha^q\boldsymbol{\Phi}(y)(J_{n-1}^{-1}-I_{n-1})^q$ est donn\'ee par $(-\alpha)^q\varphi_1(y)$. D'o\`u le r\'esultat.
\end{proof}

\begin{lemma}\label{Lemmecondsuf}
Soit $q\in\NN^{*}$ et soit $\tau=\displaystyle\frac{\Lambda}{(t-\alpha)^{q}}$, alors, pour tout $2\leq n\leq q+1$, il existe une famille d'applications $\CC$-lin\'eaires $\varphi_{i}\co\tau\to\CC_{\alpha},\ 1\leq i\leq n-1$, v\'erifiant (\ref{applin}), avec $\varphi_{1}\neq 0$.
\end{lemma}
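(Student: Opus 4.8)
The plan is to translate the statement, via the matrix reformulation used in the proof of Proposition~\ref{propphi}, into the construction of a single row vector $\boldsymbol{\Phi}=(\varphi_{1},\dots,\varphi_{n-1})\co\tau\to\CC^{n-1}$ of $\CC$-linear maps satisfying $\boldsymbol{\Phi}(t\cdot y)=\alpha\,\boldsymbol{\Phi}(y)\,J_{n-1}^{-1}$ for all $y\in\tau$, together with $\varphi_{1}\neq 0$. Indeed, equation~(\ref{applin}) is the componentwise form of the matrix identity~(\ref{Phi}), and the case of a general $k\in\ZZ$ in~(\ref{Phi}) follows formally from the case $k=1$ because $k\mapsto J_{n-1}^{-k}$ and $k\mapsto\alpha^{k}$ are multiplicative — the same propagation argument as in the proof of Proposition~\ref{propphi}. (Here $\CC$-linear maps into $\CC_{\alpha}$ are the same thing as $\CC$-linear maps into $\CC$, since~(\ref{applin}) is an identity in $\CC$, and working with the abstract module $\tau=\Lambda/(t-\alpha)^{q}$ is exactly what the lemma asks for.)

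First I would fix the $\CC$-basis $\{(t-\alpha)^{j}\}_{0\le j\le q-1}$ of $\tau$, on which multiplication by $t$ acts by $t\cdot(t-\alpha)^{j}=\alpha(t-\alpha)^{j}+(t-\alpha)^{j+1}$, with $(t-\alpha)^{q}=0$. Since $\tau$ is cyclic, generated by $1$ over $\Lambda$, any $\boldsymbol{\Phi}$ obeying the relation above is forced to satisfy $\boldsymbol{\Phi}((t-\alpha)^{j})=\alpha^{j}\,\boldsymbol{w}_{0}\,(J_{n-1}^{-1}-I_{n-1})^{j}$, where $\boldsymbol{w}_{0}:=\boldsymbol{\Phi}(1)$; conversely, this formula defines a $\CC$-linear map on $\tau$ (it is prescribed on a basis), and a direct check on the basis shows it satisfies $\boldsymbol{\Phi}(t\cdot y)=\alpha\boldsymbol{\Phi}(y)J_{n-1}^{-1}$ if and only if the value it would assign to $(t-\alpha)^{q}=0$ is $0$, i.e.\ if and only if $\boldsymbol{w}_{0}\,(J_{n-1}^{-1}-I_{n-1})^{q}=0$.

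The only real point is that this last condition is automatic under the hypothesis $n\le q+1$. Writing $J_{n-1}=I_{n-1}+\mathcal{N}_{n-1}$ with $\mathcal{N}_{n-1}$ the nilpotent shift (so $\mathcal{N}_{n-1}^{\,n-1}=0$), and using that $\mathcal{N}_{n-1}$ commutes with $J_{n-1}^{-1}$, one gets $J_{n-1}^{-1}-I_{n-1}=-\mathcal{N}_{n-1}J_{n-1}^{-1}$, hence $(J_{n-1}^{-1}-I_{n-1})^{q}=(-1)^{q}\mathcal{N}_{n-1}^{\,q}J_{n-1}^{-q}=0$ since $q\ge n-1$. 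Therefore $\boldsymbol{w}_{0}$ may be chosen arbitrarily; taking $\boldsymbol{w}_{0}=(1,0,\dots,0)$ gives $\varphi_{1}(1)=1\neq 0$, hence $\varphi_{1}\neq 0$, and the resulting family $\varphi_{1},\dots,\varphi_{n-1}$ satisfies~(\ref{applin}).

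I expect the verification that $\boldsymbol{\Phi}$ satisfies the intertwining relation on the basis to split into two trivial cases: for $0\le j\le q-2$ it reduces to $I_{n-1}+(J_{n-1}^{-1}-I_{n-1})=J_{n-1}^{-1}$ and holds with no hypothesis on $q$, while for $j=q-1$ both sides collapse to $\alpha^{q}\boldsymbol{w}_{0}(J_{n-1}^{-1}-I_{n-1})^{q-1}$ precisely because $(J_{n-1}^{-1}-I_{n-1})^{q}=0$. So the whole argument hinges on the nilpotency identity above, and everything else is bookkeeping with the chosen basis of $\tau$.
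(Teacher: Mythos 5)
Your proposal is correct and follows essentially the same route as the paper: the same basis $\{(t-\alpha)^j\}_{0\le j\le q-1}$ of $\tau$, the same defining formula $\boldsymbol{\Phi}(e_j)=\alpha^j\boldsymbol{\Phi}(e_0)(J_{n-1}^{-1}-I_{n-1})^j$ with $\boldsymbol{\Phi}(e_0)=(1,0,\ldots,0)$, the same case split $0\le j\le q-2$ versus $j=q-1$, and the same key nilpotency $(J_{n-1}^{-1}-I_{n-1})^{q}=0$ for $q\ge n-1$ (which you justify a bit more explicitly than the paper does).
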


\begin{proof}
Il suffit de montrer qu'il existe un vecteur ligne $\boldsymbol{\Phi}=(\varphi_1,\ldots,\varphi_{n-1})\co\tau\to\CC^{n-1}$ qui v\'erifie~(\ref{Phi}).

Soit $n$ un entier tel que $2\leq n\leq q+1$ et soit $\boldsymbol{\Phi}=(\varphi_1,\ldots,\varphi_{n-1})\co\tau\to\CC^{n-1}$ donn\'ee par:
\begin{displaymath}
\left\{\begin{array}{l}
\boldsymbol{\Phi}(e_0)=(1,0,\ldots,0) \\
\boldsymbol{\Phi}(e_j)=\alpha^j\boldsymbol{\Phi}(e_0)(J_{n-1}^{-1}-I_{n-1})^j\ ,\ \forall\ 1\leq j\leq q-1
\end{array}\right.
\end{displaymath}
et prolong\'ee par $\CC$-lin\'earit\'e sur $\tau$, o\`u $\{e_j=[(t-\alpha)^j];\ j=0,\ldots,q-1\}$ d\'esigne une $\CC$-base de $\tau$.

Remarquons que $\boldsymbol{\Phi}$ est bien d\'efinie puisque $q\geq n-1$ et pour tout $y\in\tau$,
\[\boldsymbol{\Phi}((t-\alpha)^q\cdot y)=\alpha^q\boldsymbol{\Phi}(y)(J_{n-1}^{-1}-I_{n-1})^q=0\,.\]
Soit $0\leq j\leq q-2$. Alors
\begin{align*}
\boldsymbol{\Phi}(t\cdot e_j)&=\boldsymbol{\Phi}(e_{j+1}+\alpha e_j)\\
&=\alpha^{j+1}\boldsymbol{\Phi}(e_0)(J_{n-1}^{-1}-I_{n-1})^jJ_{n-1}^{-1}\\
&=\alpha\boldsymbol{\Phi}(e_j)J_{n-1}^{-1}
\end{align*}
et\begin{align*}
\boldsymbol{\Phi}(t\cdot e_{q-1})&=\alpha\boldsymbol{\Phi}(e_{q-1})\\
&=\alpha^q\boldsymbol{\Phi}(e_0)(J_{n-1}^{-1}-I_{n-1})^{q-1}\\
&=\alpha^q\boldsymbol{\Phi}(e_0)(J_{n-1}^{-1}-I_{n-1})^{q-1}J_{n-1}^{-1}\\
&=\alpha\boldsymbol{\Phi}(e_{q-1})J_{n-1}^{-1}\,.
\end{align*}
Ainsi \[\boldsymbol{\Phi}(t\cdot e_j)=\alpha\boldsymbol{\Phi}(e_j)J_{n-1}^{-1}\,,\quad\forall\ 0\leq j\leq q-1\]
et\[\boldsymbol{\Phi}(t^k\cdot y)=\alpha^k\boldsymbol{\Phi}(y)J_{n-1}^{-k}\,,\quad\forall\ y\in\tau\ \text{et}\ \forall\ k\in\ZZ\,.\]
Donc $\boldsymbol{\Phi}$ v\'erifie~(\ref{Phi}).
\end{proof}

Nous avons vu, dans la proposition~\ref{propphi}, qu'il y a une correspondance entre les $\Lambda$-homomorphismes de $H_1(X^\infty,\CC)$ \`a valeurs dans $\CC_\alpha$ et le premier groupe de cohomologie $H^1(\pi,\CC_\alpha)$. Plus pr\'ecis\'ement,
\begin{equation*}
\begin{aligned}
H^{1}(\pi,\CC_{\alpha})&\simeq\hom_{\Lambda}(H_{1}(X^{\infty},\CC),\CC_{\alpha})\\
&\simeq\hom_{\Lambda}(\tau_{\alpha},\CC_{\alpha})\\
&\simeq\bigoplus_{i=1}^{k_{\alpha}}\hom_{\Lambda}(\tau_{\alpha}^{i},\CC_{\alpha})
\end{aligned}
\end{equation*}
Donc une base de $H^{1}(\pi,\CC_{\alpha})$ est donn\'ee par
$B=(\{U_{1}\},\ldots,\{U_{k_{\alpha}}\})$, o\`u  $U_{i}\co H_1(X^\infty,\CC)\to\CC_\alpha,\ 1\leq i\leq k_\alpha$ est un g\'en\'erateur de 
$\hom_{\Lambda}(\tau_{\alpha}^{i},\CC_{\alpha}).$
Par abus de notation, nous confondrons  $\{U_{i}\}$ avec le
$\Lambda$-homomorphisme correspondant. Nous pouvons alors pr\'esenter la d\'emonstration du th\'eor\`eme~\ref{thm:fond}:

\begin{proof}[Preuve du th\'eor\`eme~\ref{thm:fond}]
Soit $n\geq 2$. Pour d\'emontrer le th\'eor\`eme~\ref{thm:fond}, nous allons prouver que
\[C_n=B^1(\pi,\CC_\alpha)\oplus\vect\{U_i\ |\ q_i\geq n-1\}\,.\]
Soit $U$ un cocycle de $\pi$ dans $\CC_{\alpha}$ appartenant \`a $C_{n}$, alors, il existe un homomorphisme de groupes $\rho_{n}^{(U,v_{2},\ldots,v_{n-1})}$ de $\pi$ dans $G_n$ tel que 
$$\rho_{n}^{(U,v_{2},\ldots,v_{n-1})}(\gamma)= \left(\begin{array}{c|cccc}
\alpha^{|\gamma|}&U(\gamma)&v_{2}(\gamma)&\ldots&v_{n-1}(\gamma)\\
\hline 0&&\phi_{n-1}(\gamma)&&
\end{array}\right)\ ,\ \gamma\in\pi$$
D'apr\`es la proposition \ref{propphi}, l'existence des cocha\^ines $v_{j},\ 2\leq j\leq n-1$, est \'equivalente \`a l'existence des applications $\CC$-lin\'eaires $\varphi_{j}\co\pi'/\pi''\otimes\CC\to\CC_{\alpha}$ telles que
$$\varphi_{j}(t^{k}\cdot y)=\alpha^{k}\displaystyle\sum_{p=1}^{j}\binom{-k}{j-p}\varphi_{p}(y)\ ,\ \forall\ k\in\ZZ\ \text{et}\ \varphi_{1}=\{U\}$$
D'autre part, $\exists\ (\lambda_{i})_{1\leq i\leq k_{\alpha}}\in\CC^{k_{\alpha}}$ tel que $\{U\}=\displaystyle\sum_{i=1}^{k_{\alpha}}\lambda_{i}\{U_{i}\}$. 

Soit $1\leq i_{0}\leq k_{\alpha}$ tel que $q_{i_{0}}<n-1$, alors d'apr\`es le lemme \ref{Lemmecondnec}: $$\{U\}(y)=0,\ \forall\  y\in\tau_{\alpha}^{i_{0}}.$$\\ C'est \`a dire $\lambda_{i_{0}}\{U_{i_{0}}\}(y)=0,\ \forall\ y\in\tau_{\alpha}^{i_{0}}\ \text{car}\ \{U_{i}\}|_{\tau_{\alpha}^{j}}\equiv 0,\ \forall\ i\neq j.$\\
D'o\`u $\lambda_{i_{0}}=0.$

R\'eciproquement, soit $i\in\{1,\ldots,k_{\alpha}\}$ tel que $q_{i}\geq n-1$. D'apr\`es la proposition \ref{propphi}, pour montrer que $U_{i}\in C_{n}$, il suffit de montrer que, pour tout $2\leq j\leq n-1$, il existe une application $\CC$-lin\'eaire $\varphi_{j}\co\pi'/\pi''\otimes\CC\to\CC_{\alpha}$ telle que
$$\varphi_{j}(t^{k}\cdot y)=\alpha^{k}\displaystyle\sum_{p=1}^{j}\binom{-k}{j-p}\varphi_{p}(y),\ \forall\ k\in\ZZ\ \text{et}\ \varphi_{1}=\{U_{i}\}$$
Or l'existence de telles applications est assur\'ee par le lemme \ref{Lemmecondsuf}.

\end{proof}

\section{Exemple du n\oe ud $10_{99}$}
\label{sec:exemple}

Le premier exemple, dans le tableau de la classifiaction des n\oe uds, dont la torsion n'est ni cyclique ni semi-simple est le n\oe ud $10_{99}$. La matrice de Seifert du n\oe ud $10_{99}$ est donn\'ee par
\[V=\begin{pmatrix}
&-1&-1&0&0&0&0&-1&0&\\
&0&-1&0&0&0&0&0&0&\\
&-1&-1&-1&0&0&0&-1&0&\\
&-1&0&-1&1&0&1&0&0&\\
&-1&-1&-1&1&1&1&-1&1&\\
&0&0&0&0&0&1&0&0&\\
&0&-1&0&0&0&0&-1&0&\\
&-1&-1&-1&1&0&1&-1&1&
\end{pmatrix}\]
(voir http://www.indiana.edu/$\sim$knotinfo/)
donc une matrice de pr\'esentation du module d'Alexander est $A(t)=V^T-tV$, o\`u $V^T$ d\'esigne la matrice transpos\'ee de $V$. Le polyn\^ome d'Alexander du n\oe ud $10_{99}$ est donn\'e par $\Delta_{10_{99}}(t)=(t^2-t+1)^4$ et ses racines sont $\alpha=e^{i\pi/3}$ et $\alpha^{-1}=e^{-i\pi/3}$.

Soit $n\geq 3$. Nous cherchons un vecteur ligne $\boldsymbol{\Phi}=(\varphi_1,\ldots,\varphi_{n-1})$, o\`u les $\varphi_j\co\pi'/\pi''\otimes\CC\to\CC_\alpha$, $1\leq j\leq n-1$, sont des applications $\CC$-lin\'eaires telles que 
\[\boldsymbol{\Phi}(t^k\cdot y)=\alpha^k\boldsymbol{\Phi}(y)J_{n-1}^{-k}\,,\quad\forall\ y\in\pi'/\pi''\otimes\CC\ \text{et}\ \forall\ k\in\ZZ\,.\]

Comme $\Lambda$-module, le module d'Alexander \`a c\oe fficients complexes du n\oe ud $10_{99}$ est isomorphe \`a $\Lambda^8\big/(\Lambda^8A(t))$. Or le module $\Lambda^8$ est un $\Lambda$-module libre dont la base canonique est donn\'ee par $e_i=(0,\ldots,0,1,0,\ldots,0)$, pour $1\leq i\leq 8$. Posons $x_{ij}=\varphi_j(e_i)$, pour $1\leq i\leq 8$ et $1\leq j\leq n-1$. Alors \[\boldsymbol{\Phi}(t^k\cdot e_i)=\alpha^k\boldsymbol{\Phi}(e_i)J_{n-1}^{-k}\,,\quad\forall\ k\in\ZZ\ \text{et}\ \forall\ 1\leq i\leq 8\,,\]
o\`u $\boldsymbol{\Phi}(e_i)$ est le vecteur ligne $(x_{i1},\ldots,x_{i,n-1})$. De plus, pour que $\boldsymbol{\Phi}$ d\'efinisse une application sur $\Lambda^8\big/(\Lambda^8A(t))$ il faut et il suffit que
\begin{equation}\label{eq:module}
\boldsymbol{\Phi}(\Lambda^8 A(t))=0
\end{equation}
autrement dit,
\[\boldsymbol{\Phi}(t^ke_iA(t))=0\,,\quad\forall\ k\in\ZZ\ \text{et}\ \forall\ 1\leq i\leq 8\,.\]
Or\[\boldsymbol{\Phi}(t^ke_i(V^T-tV))=\alpha^k\boldsymbol{\Phi}(e_i(V^T-\alpha V))J_{n-1}^{-k}\]
\[+\alpha^{k+1}\boldsymbol{\Phi}(e_iV)(I_{n-1}-J_{n-1}^{-1})J_{n-1}^{-k}\,.\]
Donc~(\ref{eq:module}) est \'equivalente \`a
\begin{equation}\label{eq:cond}
(V^T-\alpha V)\boldsymbol{\varphi}+\alpha V\boldsymbol{\varphi}(I_{n-1}-J_{n-1}^{-1})=0\,,
\end{equation}
o\`u $\boldsymbol{\varphi}=(\varphi_j(e_i))_{\substack{1\leq i\leq 8\\1\leq j\leq n-1}}$.

\begin{itemize}

\item Pour $n=2$:
La condition~(\ref{eq:cond}) est \'equivalente au syst\`eme suivant:
\begin{displaymath}
\left\{\begin{array}{l}
x_{11}=\alpha x_{51}\\
x_{21}=x_{61}\\
x_{31}=-x_{51}\\
x_{41}=\alpha^2x_{51}+(\alpha-1)x_{61}\\
x_{71}=(\alpha-1)x_{61}\\
x_{81}=-\alpha x_{51}
\end{array}\right.
\end{displaymath}
Il s'en suit que $\dim H^1(\pi,\CC_\alpha)=2$, o\`u $\pi$ est le groupe du n\oe ud $10_{99}$.

\item Pour $n=3$: 
La condition~(\ref{eq:cond}) est \'equivalente \`a:
\begin{displaymath}
\left\{\begin{array}{l}
x_{11}=\alpha x_{51}\\
x_{21}=x_{61}\\
x_{31}=-x_{51}\\
x_{41}=\alpha^2x_{51}+(\alpha-1)x_{61}\\
x_{71}=(\alpha-1)x_{61}\\
x_{81}=-\alpha x_{51}\\
x_{12}=\alpha x_{51}\\
x_{22}=x_{62}+(-\alpha^2+2\alpha)x_{51}\\
x_{32}=-2x_{51}\\
x_{42}=(\alpha-1)x_{62}+\alpha^2x_{61}+(\alpha-2)x_{51}\\
x_{52}=(-2\alpha^2+\alpha)x_{61}+2x_{51}\\
x_{72}=(\alpha-1)x_{62}+(\alpha-2)x_{51}+\alpha^2x_{61}\\
x_{82}=(-2-\alpha^2)x_{61}-\alpha x_{51}
\end{array}\right.
\end{displaymath}

Ceci implique que la $(t-\alpha)$-torsion du module d'Alexander du n\oe ud $10_{99}$ se d\'ecompose sous la forme:
\[\tau_\alpha=\displaystyle\frac{\Lambda}{(t-\alpha)^{q_1}}\oplus\frac{\Lambda}{(t-\alpha)^{q_2}}\,,\ \text{avec}\ q_1,\ q_2\geq 2\,.\]

\item Pour $n=4$: 
Par un calcul direct, on montre que pour que la condition~(\ref{eq:cond}) soit satisfaite, il faut que $x_{51}=x_{61}=0$, autrement dit, il faut que $\varphi_1\equiv 0$. Par sym\'etrie, nous concluons que le module d'Alexander \`a c\oe fficients complexes du n\oe ud $10_{99}$ se d\'ecompose sous la forme:
\[\displaystyle\frac{\Lambda}{(t-\alpha)^2}\oplus\frac{\Lambda}{(t-\alpha)^2}\oplus\frac{\Lambda}{(t-\alpha^{-1})^2}\oplus\frac{\Lambda}{(t-\alpha^{-1})^2}\,.\]
\end{itemize}

\begin{center}
\textbf{Remerciements}
\end{center}

Cet article est issu de mon travail de th\`ese. Je tiens \`a remercier tr\`es vivement mes deux directeurs  Leila Ben Abdelghani et Michael Heusener pour leur soutien et tous les conseils qu'ils m'ont prodigu\'es. Leurs commentaires, leurs suggestions et leurs critiques m'ont \'et\'e tr\`es pr\'ecieux et m'ont permis d'am\'eliorer tr\`es significativement les r\'esultats que je pr\'esente dans ce papier.

\end{document}